\newtheorem{lemma}{Lemma}[section]
\newtheorem{definition}[lemma]{Definition}
\newtheorem{conjecture}{Conjecture}
\newtheorem{corollary}[lemma]{Corollary}
\newtheorem{theorem}{Theorem}
\newtheorem{observation}[theorem]{Observation}
\newcommand\ex{{\mathrm{ex}}}
\title{A characterization of edge-ordered graphs with almost linear extremal functions}
\author{Gaurav Kucheriya \thanks{Department of Applied Mathematics, Charles University, Prague, Czechia, Email: \href{mailto:gaurav@kam.mff.cuni.cz}{gaurav@kam.mff.cuni.cz}. Supported by GA\v{C}R grant 22-19073S and European Union’s Horizon 2020 research and innovation programme under the Marie Skłodowska-Curie grant agreement No. 823748.} \and G\'abor Tardos \thanks{Alfr\'ed R\'enyi Institute of Mathematics, Budapest, Hungary, Email: \href{mailto:tardos@renyi.hu}{tardos@renyi.hu}. Supported by the National Research, Development and Innovation Office, NKFIH projects K-132696 and SNN-135643 and by the ERC Advanced Grant ``GeoScape''.}}
\date{}
\begin{document}
\maketitle
\begin{abstract}
The systematic study of Tur\'an-type extremal problems for edge-ordered graphs was initiated by Gerbner et al.\ in 2020. They conjectured that the extremal functions of edge-ordered forests of order chromatic number 2 are $n^{1+o(1)}$. Here we resolve this conjecture proving the stronger upper bound of $n2^{O(\sqrt{\log n})}$. This represents a gap in the family of possible extremal functions as other forbidden edge-ordered graphs have extremal functions $\Omega(n^c)$ for some $c>1$. However, our result is probably not the last word: here we conjecture that the even stronger upper bound of $n\log^{O(1)}n$ also holds for the same set of extremal functions.
\end{abstract}

\section{Introduction}

Tur\'an-type extremal graph theory asks how many edges an $n$-vertex simple graph can have if it does not contain a subgraph isomorphic to a \emph{forbidden graph}. We introduce the relevant notation here.

\begin{definition}
We say that a simple graph $G$ \emph{avoids} another simple graph $H$, if no subgraph of $G$ is isomorphic to $H$. The Tur\'an number $\ex(n,H)$ of a \emph{forbidden} finite simple graph $H$ (having at least one edge) is the maximum number of edges in an $n$-vertex simple graph avoiding $H$.
\end{definition}

This theory has proved to be useful and applicable in combinatorics, as well as in combinatorial geometry, number theory and other parts of mathematics and theoretical computer science.

Tur\'an-type extremal graph theory was later extended in several directions, including hypergraphs, geometric graphs, vertex-ordered graphs, convex geometric graphs, etc. Here we work with \emph{edge-ordered graphs} as introduced by Gerbner, Methuku, Nagy, P\'alv\"olgyi, Tardos and Vizer in \cite{GMNPTV}. Let us recall the basic definitions.

\begin{definition}
An \emph{edge-ordered graph} is a finite simple graph $G$ together with a linear order on its edge set $E$. We often give the edge-order with an injective labeling $L: E\to\mathbb R$. We denote the edge-ordered graph obtained this way by $G^L$, in which an edge $e$ precedes another edge $f$ in the edge-order if $L(e)<L(f)$. We call $G^L$ the \emph{labeling} or \emph{edge-ordering} of $G$ and call $G$ the simple graph \emph{underlying} $G^L$.

An isomorphism between edge-ordered graphs must respect the edge-order. A subgraph of an edge-ordered graph inherits the edge-order and so it is also an edge-ordered graph. We say that the edge-ordered graph $G$ \emph{contains} another edge-ordered graph $H$, if $H$ is isomorphic to a subgraph of $G$ otherwise we say that $G$ \emph{avoids} $H$.

For a positive integer $n$ and an edge-ordered graph $H$ with at least one edge, let the Tur\'an number $\ex_<(n,H)$ be the maximal number of edges in an edge-ordered graph on $n$ vertices that avoids $H$. Fixing the \emph{forbidden edge-ordered graph} $H$, $\ex_<(n, H)$ is a function of $n$ and we call it the \emph{extremal function} of $H$.
\end{definition}

A simple but important dichotomy in classical extremal graph theory is the following:

\begin{observation}\label{obssimple}
If $F$ is a forest, then we have $\ex(n,F)=O(n)$. Otherwise, if $G$ is a simple graph that is not a forest, we have $\ex(n,G)=\Omega(n^c)$ for some $c=c(G)>1$.
\end{observation}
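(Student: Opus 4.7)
The plan is to handle the two halves of the dichotomy separately by classical methods.

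For the forest half, my approach is a greedy embedding. Let $F$ be a forest on $k$ vertices and order its vertices $v_1,\ldots,v_k$ so that every $v_i$ with $i\ge 2$ has at most one neighbor among $v_1,\ldots,v_{i-1}$; such an order exists because each tree component of $F$ can be rooted and listed in depth-first order. I would then show that any simple graph on at least $k$ vertices with minimum degree at least $k-1$ contains $F$ as a subgraph: embed the $v_i$ in order, placing $v_i$ on an arbitrary unused neighbor of the image of its unique earlier neighbor (or on an arbitrary unused vertex if $v_i$ has no earlier neighbor). This is possible because the image of the earlier neighbor has at least $k-1$ neighbors, none of which is itself, so at most $i-2\le k-2$ of them can already be used. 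Consequently, if a graph on $n\ge k$ vertices avoids $F$, it must contain a vertex of degree at most $k-2$; iteratively deleting such a vertex yields $\ex(n,F)\le(k-2)n+\binom{k-1}{2}=O(n)$.

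For the non-forest half, my plan is to reduce to forbidden cycles. If $G$ is not a forest, it contains some cycle $C_\ell$ as a subgraph, so any $C_\ell$-free graph is also $G$-free and $\ex(n,G)\ge\ex(n,C_\ell)$. If $\ell$ is odd, the balanced complete bipartite graph on $n$ vertices avoids all odd cycles and gives $\ex(n,G)=\Omega(n^2)$. If $\ell$ is even, I would apply the standard probabilistic deletion method in the Erd\H{o}s--R\'enyi random graph $G(n,p)$ with $p=c n^{-(\ell-2)/(\ell-1)}$ for a sufficiently small constant $c>0$: the expected number of edges is of order $n^{1+1/(\ell-1)}$ and dominates the expected number of copies of $C_\ell$, so deleting one edge from each copy leaves a $C_\ell$-free graph with $\Omega(n^{1+1/(\ell-1)})$ edges. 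In either subcase $\ex(n,G)=\Omega(n^c)$ for some $c=c(G)>1$.

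Both halves are completely classical and no step poses a real obstacle; the only care required is to choose the constant $c$ in the random construction small enough that in expectation a positive fraction of the edges survives the deletion step, and to verify that the claimed greedy ordering of the vertices of $F$ exists for an arbitrary forest (which is immediate by processing each component with a DFS from an arbitrary root).
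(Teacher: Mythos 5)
Your proposal is correct and follows essentially the same route as the paper: a minimum-degree/degeneracy argument giving $\ex(n,F)\le(k-2)n+O(1)$ for a forest $F$ on $k$ vertices, and for the non-forest case a reduction to a contained cycle handled by the complete balanced bipartite graph (odd cycles) and the probabilistic deletion method (even cycles). The only difference is that you spell out the greedy embedding and the deletion computation, which the paper leaves as standard facts.
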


Let $F$ be forest on $k$ vertices. The well known Erd\H os-S\'os conjecture claims that $\ex(n,F)\le(k-2)n/2$, see \cite{Erdos}. Many special cases of this conjecture have been established, but it remains open in its full generality. However, it is easy to prove that if the minimum degree of a graph is at least $k-1$, then it contains $F$ and this implies $\ex(n,F)\le(k-2)n=O(n)$ as any graph on $n$ vertices with more than $(k-2)n$ edges contains a subgraph of minimum degree $k-1$. To see the other direction of the observation above, consider a graph $G$ that is not a forest. It contains a cycle $C_k$ and thus $\ex(n,G)\ge\ex(n,C_k)=\Omega(n^{1+1/k})$, where the bound on the extremal function of the cycle is proved by a standard application of the \emph{probabilistic method}. For even values of $k$, see stronger lower bounds on $\ex(n,C_k)$ in \cite{LUW}. For odd values of $k$, we trivially have $\ex(n,C_k)\ge\lfloor n^2/4\rfloor$ as the complete balanced bipartite graph does not contain $C_k$.

This observation gives a simple characterization of graphs with linear extremal functions and also claims the existence of a large gap in the extremal functions. Such a gap does not exist for edge-ordered graphs as there are several edge-ordered graphs (among them several distinct edge-orderings of the 4-edge path) whose extremal functions are $\Theta(n\log n)$, see \cite{GMNPTV}. Therefore one has two ways to find an analogue of Observation~\ref{obssimple} about edge-ordered graphs. First, one can try to characterize edge-ordered graphs with strictly linear extremal functions, or one can try to characterize edge-ordered graphs with \emph{almost linear} extremal function, that is, with extremal functions in the class $n^{1+o(1)}$. The former problem eluded a solution so far. We have partial results, namely characterization of \emph{connected} edge-ordered graphs with linear extremal functions in the upcoming paper \cite{KT2}. In this paper we address the latter problem, namely we prove the following conjecture of \cite{GMNPTV}:

\begin{conjecture}\label{conmain}
The extremal function of an edge-ordered graph $G$ is almost linear if and only if $G$ is an edge-ordered forest of order chromatic number 2.
\end{conjecture}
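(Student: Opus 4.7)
The plan is to prove the biconditional by treating its two directions separately, the interesting one being the upper bound on extremal functions.

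\textbf{Necessity.} Suppose $\ex_<(n,G) = n^{1+o(1)}$. Any simple graph $H$ on $n$ vertices that avoids the underlying simple graph of $G$ also avoids $G$ under any edge-ordering, so $\ex_<(n,G) \ge \ex(n,\text{underlying}(G))$. Combined with Observation~\ref{obssimple}, this forces the underlying graph of $G$ to be a forest, as otherwise the extremal function is $\Omega(n^c)$ for some $c>1$. The order chromatic number constraint is already known from \cite{GMNPTV}, where edge-ordered graphs of order chromatic number at least 3 were shown to have extremal functions $\Omega(n^{1+\delta})$ for some $\delta>0$; we simply invoke that lower bound.

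\textbf{Sufficiency.} The heart of the paper is the upper bound $\ex_<(n,F) \le n\cdot 2^{O(\sqrt{\log n})}$ for every edge-ordered forest $F$ of order chromatic number 2, which is certainly $n^{1+o(1)}$. A first reduction handles multiple components: copies of the components of $F$ can be embedded into the host sequentially by a standard ``extract one copy and delete $O(1)$ vertices'' argument, so it suffices to bound $\ex_<(n,T)$ for a single edge-ordered tree $T$ of order chromatic number 2. I would then unpack the order-chromatic-2 hypothesis into a concrete 2-coloring $V(T)=A\sqcup B$ carrying a compatibility property relating the color classes to the edge-order of $T$, and feed this into a recursive label-based decomposition of the host $H$: partition $H$'s edges into blocks by label rank, appeal to the structural property to argue that avoidance of $T$ forces one block to avoid a reduced sub-pattern $T'\subset T$, and recurse on that block. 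Calibrating the block sizes so that the recursion has depth $\sqrt{\log n}$, and using that $T$ is a tree so no cycle-closing obstruction arises when assembling a copy from disjoint sub-patterns, a standard recursion-tree calculation delivers the advertised $2^{O(\sqrt{\log n})}$ overhead.

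\textbf{Main obstacle.} The key difficulty is the interplay between two kinds of order: the edge labels of the host and the abstract edge-order carried by $T$. Standard techniques for vertex-ordered bipartite patterns (à la Marcus--Tardos) exploit a clean decoupling of vertex order and combinatorial structure that is unavailable here, since both host and pattern carry an edge-order simultaneously. The crucial step is therefore a structural lemma converting edge-density in an edge-ordered host into the existence of a bipartite-like substructure whose two sides and internal edge-order are compatible with $T$'s coloring and edge-order. Translating the abstract order-chromatic-2 hypothesis into such a usable statement about dense hosts is where a genuinely new idea is likely needed, and the trade-off between the cost of this step and the depth of the recursion is precisely what produces the $2^{O(\sqrt{\log n})}$ overhead, leaving open the stronger $\log^{O(1)} n$ bound conjectured in the abstract.
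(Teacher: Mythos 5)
Your necessity argument is fine and matches the paper: the underlying graph must be a forest by comparison with the unordered extremal function, and order chromatic number $\ge 3$ is excluded by the edge-ordered Erd\H{o}s--Stone--Simonovits theorem of \cite{GMNPTV}. The problem is the sufficiency direction, which is the actual content of the paper, and there your text is a plan rather than a proof: the step you yourself call ``where a genuinely new idea is likely needed'' --- converting avoidance of $T$ in a dense edge-ordered host into usable structure --- is exactly the missing core, so the proposal has a genuine gap. Moreover, the recursion you sketch does not have the right shape to produce the claimed bound. If each level passes to a label block that avoids a strictly smaller sub-pattern $T'\subset T$, the recursion bottoms out after at most $|E(T)|$ levels, a constant, so ``calibrating block sizes so that the recursion has depth $\sqrt{\log n}$'' is not coherent as stated; and the claim that avoidance of $T$ forces \emph{some} block to avoid a reduced sub-pattern is asserted, not argued. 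A further flaw is the opening reduction to a single tree: for edge-ordered forests you cannot simply extract a copy of one component and delete $O(1)$ vertices, because the forbidden forest's edge-order interleaves edges of different components, so disjoint copies of the components found independently need not assemble into a copy of $F$ with the correct interleaving.

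For comparison, the paper keeps the \emph{same} forbidden forest $H$ throughout and runs a density increment on the host. Using Lemma~\ref{ocn2}, $H$ has a 2-coloring with $k$ close ``left'' vertices; a grid $0=t_0<\cdots<t_k=m$ splits the host's labels into $k$ classes, each vertex gets weight $\min_j d_j(v)$, and an induction (peeling left leaves, or right leaves incident to extreme edges) shows that total weight $\ge 2\ell^2 n$ forces an embedding of $H$ --- so in an $H$-avoiding host the weight is small for \emph{every} grid. Averaging over random grids and splitting vertices into tame and wild then yields a subgraph with at most $f=\lfloor(134k\ell^2/d)^{1/(k-1)}m\rfloor$ edges whose average degree drops only by the factor $4k-4$ (Theorem~\ref{step}); iterating this on the subgraph, which still avoids $H$, until the constraint $m_t\ge d_t$ becomes violated forces $d_0=2^{O(\sqrt{\log n_0})}$, which is where the $\sqrt{\log n}$ actually comes from. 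Handling the forest directly, with no reduction to trees, is what lets the induction in Lemma~\ref{largeweight} go through. So while your high-level intuition (exploit closeness via a label-based decomposition, accept a $2^{O(\sqrt{\log n})}$ loss) points in the right direction, the proposal as written neither supplies the key lemma nor a recursion scheme that could deliver the bound.
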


Here we use the term ``edge-ordered forest'' to denote an edge-ordered graph with a forest as its underlying simple graph. We use the terms ``edge-ordered path'' or ``edge-ordered tree'' similarly.
The notion of \emph{order chromatic number} is defined in the paper \cite{GMNPTV}. Instead of recalling the definition we recall a simple characterization of edge-ordered forests of order-chromatic number 2, also from \cite{GMNPTV} that the reader can treat as a definition for the purposes of this paper. See Figure~\ref{figclose} for an illustration.

\begin{definition}
We call a vertex $v$ of an edge-ordered graph \emph{close} if the edges incident to $v$ are consecutive in the edge-ordering, that is, they form an interval in the edge-order.
\end{definition}

\begin{lemma}[\cite{GMNPTV}]\label{ocn2}
Let $H$ be an edge-ordered forest with at least one edge. The order chromatic number of $H$ is 2 if and only if $H$ has a proper 2-coloring such that all vertices in one of the two color classes are close.
\end{lemma}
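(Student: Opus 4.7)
The plan is to prove the two directions separately, leveraging the crucial structural fact that a forest $H$ has an essentially unique proper 2-coloring per connected component (the two colors can be swapped, but only independently on each component). This rigidity lets us reduce the \emph{order chromatic number 2} condition, which in \cite{GMNPTV} is a Ramsey-type embedding statement, to a concrete combinatorial condition about embedding $H$ into edge-ordered complete bipartite hosts with a prescribed split of its vertex set.

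For the ``if'' direction, suppose $V(H)=A\cup B$ is a proper 2-coloring with every vertex of $A$ close in the edge-order of $H$. To show that the order chromatic number is at most $2$, one must produce $H$ in every sufficiently large edge-ordered host witnessing order chromatic number $2$. I would proceed by induction on $|A|$: select a vertex $a\in A$ whose neighbors in $B$ are (with at most one exception) leaves, which exists by taking a deepest $A$-vertex in a BFS layering of each tree component; apply the induction hypothesis to $H$ minus $a$ and its leaf neighbors to embed the rest; then use the closeness of $a$ together with a pigeonhole argument on the large host to find a vertex of the correct side whose incident edges to the already-embedded $B$-neighbors of $a$ lie consecutively and in the required relative order.

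For the ``only if'' direction, assume no proper 2-coloring of $H$ has one side entirely of close vertices. Since the forest $H$ has only $2^{c(H)}$ proper 2-colorings (where $c(H)$ is the number of components), I would build a single adversarial edge-ordering of a large bipartite host that avoids $H$ no matter which coloring is tried, thereby certifying $\chi_<(H)>2$. For each candidate coloring $V(H)=A\cup B$, the hypothesis supplies a non-close vertex on at least one side; one designs the host edge-order so that no host vertex on the relevant side can simulate such a non-close vertex, blocking any embedding compatible with that coloring. The finitely many obstructions, one per candidate coloring, are then combined into a single ordering by carefully interleaving label ranges.

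The main obstacle is precisely this combination step in the ``only if'' direction: defeating every proper 2-coloring of $H$ simultaneously with one labeling is nontrivial, and it is here that the forest structure is essential—both because it keeps the number of proper 2-colorings small (enabling a union-bound style combination) and because it guarantees leaves in $H$, which let the ``if'' direction's induction bottom out cleanly. Extending either argument beyond forests would require genuinely new ideas, consistent with the fact that for non-forest edge-ordered graphs the extremal function is already $\Omega(n^c)$ for some $c>1$.
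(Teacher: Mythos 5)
This lemma is stated in the paper with the citation \texttt{[GMNPTV]} and is \emph{not} proved here: the authors explicitly say that they do not even recall the definition of order chromatic number and that the reader may ``treat \[Lemma~\ref{ocn2}\] as a definition for the purposes of this paper.'' So there is no in-paper proof to compare your attempt against; any correct proof must go back to the actual definition of order chromatic number in \cite{GMNPTV}, which your proposal never pins down.

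That omission is not cosmetic --- it causes a concrete gap in your ``only if'' direction. In \cite{GMNPTV} the order chromatic number is defined via a \emph{fixed, finite} family of canonical edge-orderings of complete multipartite graphs: $\chi_<(H)\le r$ means $H$ embeds into one of the canonical edge-orderings of a large complete $r$-partite graph. To show $\chi_<(H)>2$ one therefore does not get to ``build a single adversarial edge-ordering of a large bipartite host''; rather, one must show that $H$ fails to embed into \emph{each} of the finitely many canonical orderings of $K_{N,N}$. In a canonical bipartite ordering the edges are grouped so that every vertex of one side is automatically close, so an embedding of $H$ immediately pulls back a proper $2$-coloring of $H$ in which that side is all close --- that is the intended contrapositive, and it is much shorter and more rigid than the adversarial construction you sketch. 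Your framing would in fact be addressing the (different and harder) question of whether $\ex_<(n,H)$ is superlinear, not the question of whether $\chi_<(H)>2$. The ``if'' direction is closer in spirit to the real argument (greedy embedding into a canonical $K_{N,N}$ using closeness and the leaf structure of a forest), but as written it leans on an unspecified ``pigeonhole argument'' without saying which canonical ordering is targeted or why the host's canonical structure guarantees the consecutive block of labels you need for the image of a close vertex $a$. I would recommend first writing out the canonical orderings of $K_{N,N}$ explicitly and then redoing both directions against that concrete target.
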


\begin{figure}\label{figclose}
		\begin{minipage}[t]{.48\linewidth}
			\centering
			\fbox{\includegraphics[scale=0.55]{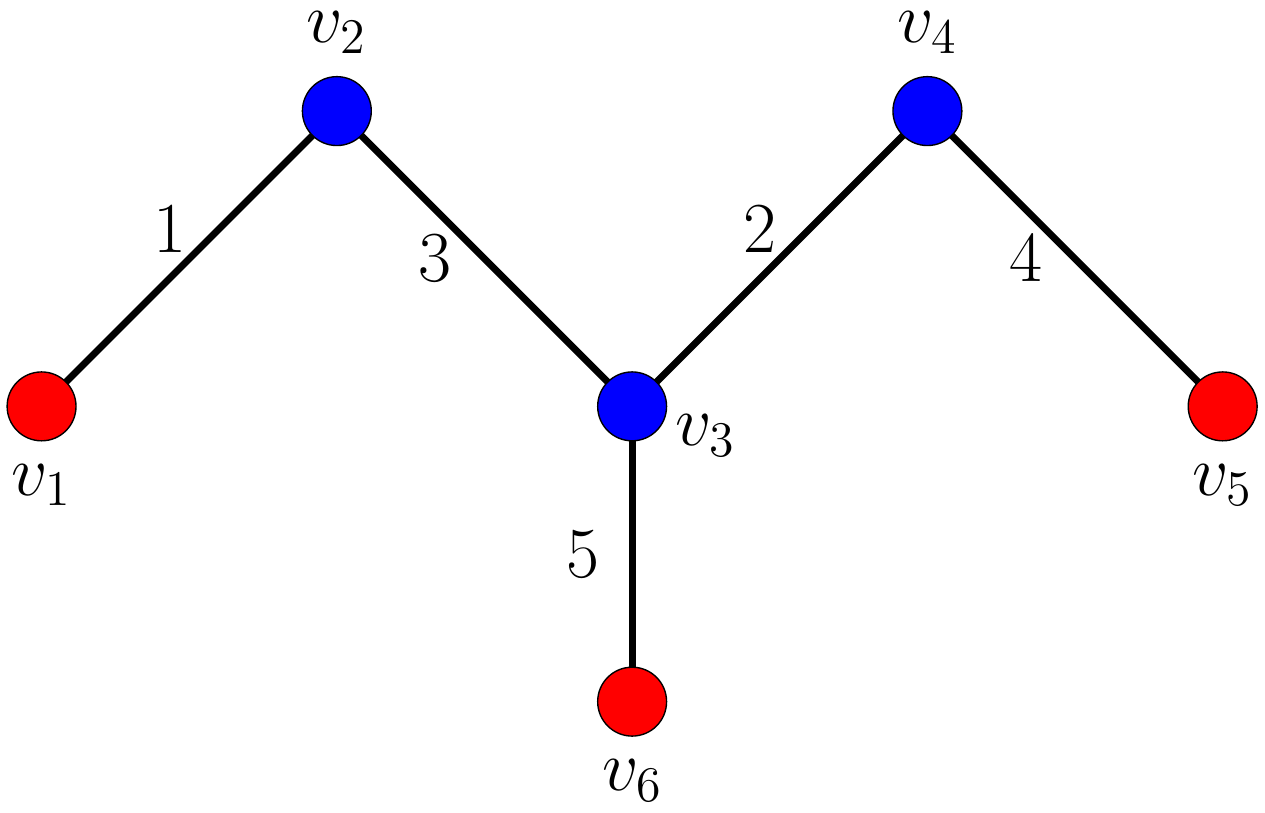}}
		\end{minipage}\qquad
		\begin{minipage}[t]{.48\linewidth}
			\centering
			\fbox{\includegraphics[scale=0.55]{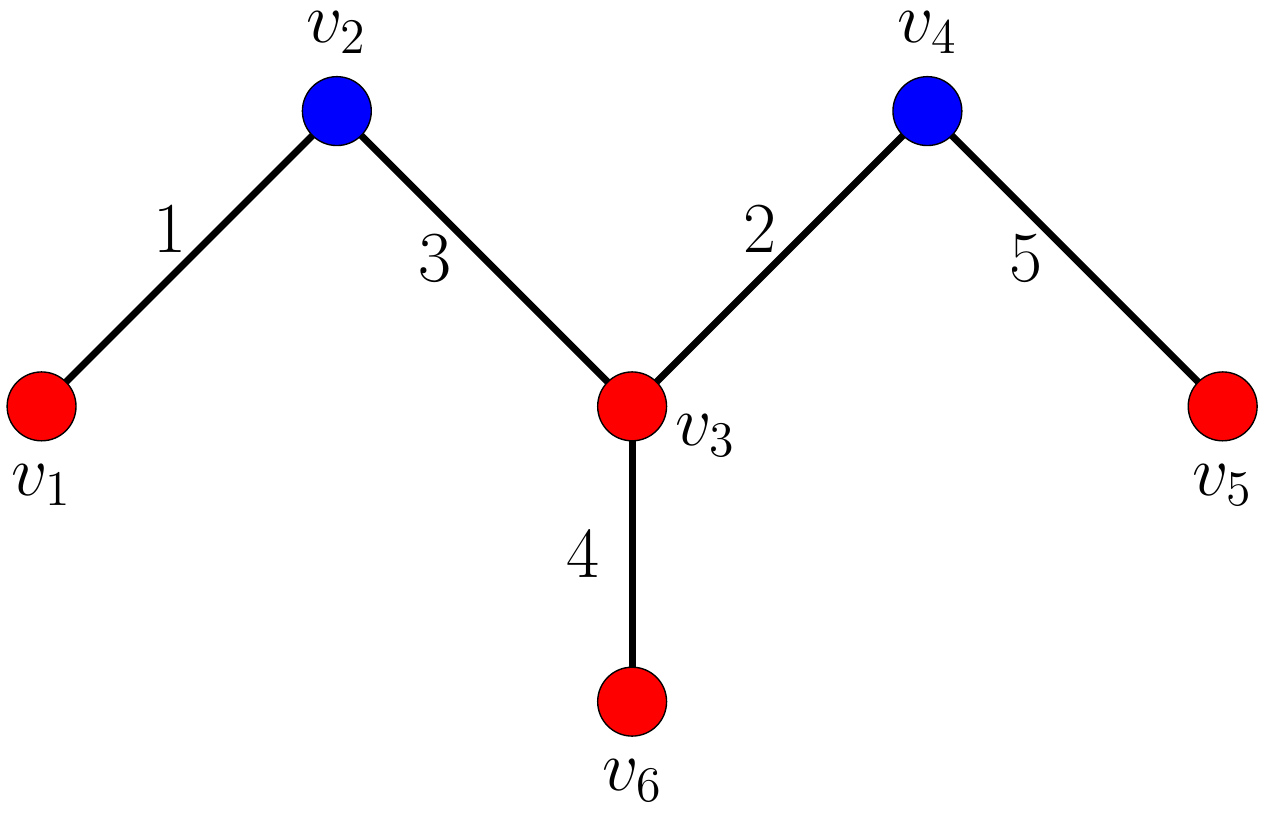}}
		\end{minipage}
	\caption{Close vertices are colored in red and the non-close in blue. The tree on the left has order chromatic number higher than $ 2 $, while the one on the right has order chromatic number $ 2 $.}
	\end{figure}

The most general result of Tur\'an-type extremal graph theory is the Erd\H os-Stone-Simonovits theorem, \cite{ES1,ES2}. It states that the extremal function of any non-bipartite forbidden simple graph is $\Theta(n^2)$ and gives the exact asymptotics in terms of the \emph{chromatic number} of the forbidden graph. An analogous result for edge-ordered graphs appeared in \cite{GMNPTV}. It states that if the forbidden edge-ordered graph has order chromatic number larger than 2, then its extremal function is $\Theta(n^2)$ and gives the exact asymptotics in terms of its order chromatic number. A notable difference between the chromatic number of simple graphs and the order chromatic number of edge-ordered graphs is that the latter is not necessarily finite: the order chromatic number of a finite edge-ordered graph is either a positive integer or infinity.

In light of the above, the ``only if'' direction of Conjecture~\ref{conmain} is self-evident. Indeed, if the order chromatic number of an edge-ordered graph is not 2, then its extremal function is quadratic (note that the order chromatic number is 1 only for edge-ordered graphs with no edges, and extremal functions are not defined for those). And if the underlying simple graph $G_0$ of the edge-ordered graph $G$ is \emph{not} a forest, then we simply have
$$\ex_<(n,G)\ge\ex(n,G_0)=\Omega(n^c)$$
for some $c=c(G)>1$. Here the first inequality trivially holds for the extremal functions of every edge-ordered graph and its underlying simple graph, while the equality is a special case of Observation~\ref{obssimple} and can be simply verified for $c=k/(k-1)$ if $G_0$ contains a $k$-cycle.

Our main result here is proving the missing ``if'' direction of Conjecture~\ref{conmain} in a stronger form:

\begin{theorem}\label{main}
If $H$ is an edge-ordered forest with order chromatic number 2, then
$$\ex_<(n,H)=n\cdot2^{O(\sqrt{\log n})}.$$
\end{theorem}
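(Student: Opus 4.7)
The plan is to prove Theorem~\ref{main} by induction on the number $s$ of close vertices of $H$, combined with a block-decomposition of the host edge-ordered graph $G$ that produces the factor $2^{O(\sqrt{\log n})}$ via a recursion of depth $O(\sqrt{\log n})$. First I fix the proper 2-coloring of $H$ provided by Lemma~\ref{ocn2}, with close class $R$ and other class $B$. Since $H$ is bipartite between $R$ and $B$, the edges incident to distinct close vertices are disjoint, and each such edge-set is an interval in the edge-order, so the edge-order of $H$ decomposes as a concatenation of $s=|R|$ stars $S_1<S_2<\cdots<S_s$ centered at $r_1,\dots,r_s$, whose leaves in $B$ may be shared across stars according to the tree-structure of $H$.

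The base case $s=1$ is trivial: $H$ is a star $K_{1,d}$, all edge-orderings of which are isomorphic (the leaves are interchangeable), so any $G$ with more than $(d-1)n/2$ edges contains $H$. For the inductive step, set $H^{-}=H\setminus\{r_s\}$, an edge-ordered forest with $s-1$ close vertices and strictly fewer edges, hence satisfying $\ex_<(n,H^{-})\le n\cdot 2^{O(\sqrt{\log n})}$ by the inductive hypothesis. The crucial structural observation is that any copy of $H$ in a host graph splits canonically into a copy of $H^{-}$ using only edges that precede the $d:=\deg_H(r_s)$ edges playing the role of $r_s$ (because $S_s$ is the last star in the edge-order of $H$); those $d$ edges must be incident to a single common vertex $w$ of the host and must connect $w$ to the specific $B$-neighbors of $r_s$ that the $H^{-}$-copy has already selected.

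Given $G$ on $n$ vertices with $m$ edges avoiding $H$, I partition its edge-order into $T=\lceil 2^{\sqrt{\log n}}\rceil$ consecutive blocks of $\sim m/T$ edges each. Either some single block already contains $H$ on its induced vertex set, in which case I recurse on that block (here the depth of the recursion is controlled because each level divides the edge count by $T$, and the linear threshold $O(n)$ is reached in $O(\sqrt{\log n})$ levels, contributing $T^{O(\sqrt{\log n})}=2^{O(\log n)}$ only in the worst case, which must therefore not occur if $m$ exceeds the claimed bound); or no block is individually dense enough, and I count extension pairs: for every block $B_j$ and every vertex $w$ of $G$ with at least $d$ incident edges in $B_j$, $w$ is a candidate to serve as $r_s$ for any $H^{-}$-copy that lies entirely before $B_j$ and whose $r_s$-adjacent blue vertices coincide, in the correct order, with $w$'s $B_j$-neighbors. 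Using the inductive bound on $\ex_<(\cdot,H^{-})$ to lower-bound the number of such $H^{-}$-copies together with a pigeonhole over candidates $(w,B_j)$, the avoidance of $H$ forces a recursion of the form $m\le m/T+O\bigl(\ex_<(n,H^{-})\bigr)$ per level, which iterates in $O(\sqrt{\log n})$ rounds to give $m\le n\cdot 2^{O(\sqrt{\log n})}$.

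The main obstacle will be the supersaturation step: producing not just a \emph{single} $H^{-}$-copy in the prefix $B_1\cup\cdots\cup B_{j-1}$ but a \emph{diverse family} whose blue boundary at the $r_s$-neighbors of $H^{-}$ ranges widely enough that some member is compatible with the $B_j$-neighborhood of a candidate $w$. This forces me to strengthen the inductive statement to count anchored or partially-labeled $H^{-}$-copies rather than merely bound edges, and to balance carefully so that peeling one close vertex costs only a $2^{O(1)}$ factor per block-decomposition level. Iterating $O(\sqrt{\log n})$ such levels then delivers the target bound $n\cdot 2^{O(\sqrt{\log n})}$.
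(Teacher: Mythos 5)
Your high-level picture --- a density-increment recursion of depth roughly $\sqrt{\log n}$ that produces the factor $2^{O(\sqrt{\log n})}$ --- is the right shape for the bound, and your structural observation that a copy of $H$ decomposes as a copy of $H^{-}=H\setminus\{r_s\}$ followed by one more star (since $r_s$ is close and its star is last in the edge-order) is correct. But the proposal has genuine gaps that you cannot paper over.

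First, partitioning the edge-order into $T$ consecutive blocks of $\sim m/T$ edges each does \emph{not} produce a density increment: the vertex set of a block is uncontrolled, so the block can have average degree far smaller than $d$, not larger. The paper handles exactly this with a separate analysis (random grids of $k-1$ thresholds, a vertex-weight parameter $W$, the tame/wild distinction, and the $G^{*}$ construction), which guarantees a subgraph with at most $f=\lfloor(134k\ell^2/d)^{1/(k-1)}m\rfloor$ edges while losing only a constant factor in average degree. Without something of this kind, your per-level edge shrinkage by a factor of $T$ is bought at the price of an equal loss in average degree, and the recursion produces no contradiction. Second, the branch ``some block contains $H$ on its induced vertex set'' is vacuous under the standing hypothesis that $G$ avoids $H$; and the follow-up that the recursion contributing $T^{O(\sqrt{\log n})}=2^{O(\log n)}$ ``must not occur if $m$ exceeds the claimed bound'' is circular --- you are trying to derive that bound, not allowed to assume it. Third, and most importantly, the supersaturation step you flag as the ``main obstacle'' really is the crux: to extend an $H^{-}$-copy by one star you need a large, \emph{diverse} family of anchored $H^{-}$-copies whose right boundary matches the $B_j$-neighborhood of a candidate $w$ in the prescribed order, and you give no mechanism for producing it. The inductive upper bound $\ex_<(\cdot,H^-)$ says nothing about how many copies a dense graph must contain, let alone copies with controlled boundary.

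The paper sidesteps supersaturation entirely. Its key lemma (Lemma~\ref{largeweight}) is a greedy/inductive embedding statement: if the total vertex-weight $W_t(G')$ relative to a grid $t$ exceeds $2\ell(u+1)n$, then every $u$-edge subgraph of $H$ has a ``nice embedding'' in $G'$. The induction there is on the number of edges of $H'\subseteq H$, not on the number of close vertices, and extending by a right leaf uses a careful ``remove the $\ell$ smallest/largest eligible edges'' step to make room for the new image while preserving the edge-order. The density increment (Theorem~\ref{step}) then follows from the tame/wild analysis and gives a recursion in which the average degree drops only by a constant factor per level while the edge count drops by roughly $d^{1/(k-1)}$; the exponent $\sqrt{\log n}$ emerges from balancing this recursion, not from choosing a block count of $2^{\sqrt{\log n}}$.
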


This result establishes a gap in the extremal functions of edge-ordered graphs, although this gap is smaller than the one we saw for simple graphs: if an extremal function is \emph{not} of the form $n2^{O(\sqrt{\log n})}$, then it must be of the form $\Omega(n^c)$ for some $c>1$.

The \emph{vertex-ordered} variant of Conjecture~\ref{conmain} has a longer history. Vertex-ordered graphs and their extremal functions can be defined analogously to edge-ordered graphs. Pach and the second author, \cite{PT}, proved the vertex-ordered version of the Erd\H os-Stone-Simonovits theorem in which the \emph{interval chromatic number} plays the role played by the chromatic number in the classical result. They conjectured that the extremal functions of vertex-ordered forests of interval chromatic number 2 are $O(n\log^{O(1)}n)$ or (in a weaker form of the conjecture) $n^{1+o(1)}$. Both forms of the conjecture are still open in general, but the paper \cite{PT} proves the stronger form for vertex-ordered forests of up to 6 vertices and in a more recent paper Kor\'andi, Tardos, Tomon and Weidert, \cite{KTTW} prove the weaker conjecture for a large class of vertex-ordered forests.

The history of the vertex-ordered version of the conjecture goes back even more. F\"uredi and Hajnal, \cite{FH}, introduced the extremal function of 0-1 matrices in 1992 and formulated a conjecture that turns out to be equivalent to stating that an $O(n\log n)$ bound holds for the extremal function of any vertex-ordered forest of interval chromatic number 2. This proved to be too strong as Pettie, \cite{Pettie} found a counterexample.

The rest of the paper is organized as follows. In Section~\ref2 we state and prove Theorem~\ref{step} which serves as a single step in our density increment argument proving Theorem~\ref{main}. This argument is presented in Section~\ref3. We finish the paper with some concluding remarks in Section~\ref4.

\section{The density increment step}\label2

As we have mentioned in the Introduction, we will use a density increment argument in the next section to prove Theorem~\ref{main}. We use ``density'' informally here as we do not introduce a single parameter we would call the density of an (edge-ordered) graph, but rather we concentrate on two parameters: the average degree and the number of edges. A graph with $n$ vertices and $m$ edges has average degree $d=2m/n$. Note that in this setting the smaller the number of edges are (for a given average degree) the denser we consider the graph. The following theorem represents a density increment step as passing from a graph to its subgraph we only loose a constant factor in the average degree, but decrease the size (number of edges) more significantly.

\begin{theorem}\label{step}
Let $H$ be an edge-ordered forest on $\ell$ vertices with a proper $2$-coloring such that one side consists of $k\ge2$ vertices, all of which are close. If the edge-ordered graph $G$ with $m$ edges and average degree $d>0$ avoids $H$, then $G$ has a subgraph with at most $f$ edges and average degree at least $d/(4k-4)$, where $f=\lfloor(134k\ell^2/d)^{1/(k-1)}m\rfloor$.
\end{theorem}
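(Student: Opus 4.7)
The plan is to apply a partition-based density increment argument. First, I would partition the edge set $E(G)$ into consecutive intervals $I_1, \ldots, I_s$ in the edge-order, each of size at most $f$, and write $G_j$ for the subgraph spanned by $I_j$. If some $G_j$ has average degree at least $d/(4k-4)$, it is the desired subgraph (it has at most $f$ edges by construction). Otherwise every $G_j$ satisfies $|V(G_j)| > (8k-8)|I_j|/d$, so summing yields $\sum_j |V(G_j)| > (8k-8)m/d = (4k-4)n$; equivalently, on average each vertex of $G$ has incident edges in at least $4k-4$ distinct intervals of the partition. From this sparsity hypothesis I would aim to derive a contradiction with the assumption that $G$ avoids $H$.

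The key structural observation comes from the hypothesis on $H$: since one color class of a proper $2$-coloring of $H$ consists of $k$ close vertices, these $k$ vertices form an independent set, and each close $u_i$ has its incident edges forming an interval in the edge-order of $H$. Any two such intervals must be disjoint (otherwise two close vertices would share an edge) and together they cover $E(H)$ (every edge has a close endpoint), so $E(H)$ is partitioned into $k$ consecutive blocks of sizes $d_1, \ldots, d_k$ with $d_i = \deg_H(u_i)$. Consequently, finding a copy of $H$ inside $G$ reduces to selecting vertices $v_1, \ldots, v_k \in V(G)$ and interval indices $j_1 < j_2 < \cdots < j_k$ such that each $v_i$ has $d_i$ well-chosen edges in $I_{j_i}$ realizing the required adjacencies to the $\ell-k$ non-close vertices of $H$; the edge-order condition on the embedding is automatic from the $j_i$'s being strictly increasing, since all edges of $I_{j_i}$ precede all edges of $I_{j_{i+1}}$ in the edge-order of $G$.

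To produce such a configuration I would count partial embeddings greedily, extending the image of one close vertex at a time. Starting from the many candidates for $(v_1, j_1)$ afforded by $\sum_j |V(G_j)| > (4k-4)n$, I would show that each extension step from the first $i$ close vertices to the first $i+1$ costs at most a multiplicative factor controlled by $\ell$ (the number of potential non-close neighbors of $u_{i+1}$ and their coherence with already-committed images) and by the ``dilution'' $f/m$ (the fraction of $G$'s edges available inside a single interval). Balancing so that $k-1$ such extensions leave the count of full embeddings positive forces the exponent $1/(k-1)$ and the constant $134k\ell^2/d$ appearing in the definition of $f$. The main obstacle is organizing this counting step cleanly: non-close vertices of $H$ may have degree larger than one, so a single non-close vertex $w$ adjacent to several close vertices must have its image agree across several intervals, and all $\ell$ images must be pairwise distinct. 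The specific constants $4k-4$ and $134k\ell^2$ should emerge from calibrating these losses optimally across the $k-1$ extension steps.
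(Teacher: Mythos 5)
Your setup captures two genuine ingredients of the proof: the observation that the $k$ close vertices split $E(H)$ into $k$ consecutive blocks, and the expectation that choosing $k-1$ thresholds/intervals of length $f$ is what produces the exponent $1/(k-1)$. But there are two real gaps. First, your dichotomy is stronger than the theorem: you insist that the dense subgraph be one of the aligned label-blocks of $G$ itself, and otherwise derive only the averaged statement $\sum_j|V(G_j)|>(4k-4)n$. The paper does not (and cannot easily) work with blocks of the full graph: it first passes to a cleaned subgraph $G^*$, keeping only ``tame'' vertices (those for which $0.9$ of the incident labels are covered by $k-1$ intervals of length $f$) and only edges covered on both endpoints, and only then partitions by label windows; the bound ``each vertex lies in at most $2k-2$ windows'' holds for tame vertices by construction, not on average. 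Your averaged spread hypothesis is compatible with the spread being concentrated on a few high-degree vertices or with low-degree ``junk'' vertices polluting every window, so the contradiction branch starts from much weaker information than the argument needs.

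Second, and more importantly, the heart of the proof --- actually embedding $H$ when vertices are spread --- is missing. Your claim that the edge-order condition is ``automatic'' from $j_1<\cdots<j_k$ only handles edges at distinct close vertices; within a single close vertex's star the images must realize a prescribed relative order, and a right vertex of degree greater than one must simultaneously have suitably ordered edges to several $v_i$'s in several blocks, with all $\ell$ images distinct. You name exactly this as ``the main obstacle'' but give no mechanism for it, and a one-step-at-a-time greedy count with loss factors ``controlled by $\ell$ and $f/m$'' does not obviously close. The paper resolves it with dedicated machinery: a uniformly random grid of $k$ thresholds, the vertex weight $W(v)=\min_{1\le j\le k}d_j(v)$, the notion of heavy vertices (at least $\ell$ edges in \emph{every} class), and an induction over subforests of $H$ that removes either a left leaf or a right leaf whose edge is extremal at its left neighbor, after deleting the $\ell$ smallest (or largest) eligible edges at every vertex so that an order-consistent extension is guaranteed; wildness is then converted into large expected weight via an adaptive choice of $k-1$ length-$f$ intervals, and $W_t(G)<2\ell^2 n$ (from $H$-avoidance) bounds the total degree of wild vertices. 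None of these steps is routine, and without them your outline is a plan for the easy bookkeeping around a core argument that still has to be supplied.
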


We prove this theorem through a series of lemmas. We start by fixing the edge-ordered forest $H$ together with $k$ and $\ell$ as in the theorem. That is, $H$ has $\ell$ vertices in total, out of which one side of a proper 2-coloring contains $k\ge2$ vertices, all of which are close. For easier reference, we call these $k$ vertices the \emph{left vertices} and the remaining $\ell-k$ vertices as the \emph{right vertices}. Since the left vertices of $H$ are close, we can enumerate them as $w_1,\dots,w_k$ such that all the edges incident to $w_i$ precede any edge incident to $w_j$, whenever $i<j$.

We also fix the edge-ordered graph $G$ with $m$ edges and average degree $d>0$ that avoids $H$. $G$ has $n=2m/d$ vertices. Let $G_0$ be its underlying simple graph and let us fix the labeling $G=G_0^L$ such that $L$ uses the integer labels from $1$ through $m$. We call a sequence of integer thresholds $t=(t_0,\dots,t_k)$ satisfying $0=t_0<t_1<\cdots<t_k=m$ a \emph{grid}. Using such a grid $t$ we classify the edges of $G_0$ into $k$ \emph{classes}: we say that an edge $e$ of $G$ belongs to class $j$ ($1\le j\le k$) if $t_{j-1}<L(e)\le t_j$. Let $G'$ be an arbitrary subgraph of $G$. For a class $j$ and a vertex $v$ of $G'$, we write $d_j(v)$ for the \emph{$j$-degree} of $v$ in $G'$, that is, for the number of class $j$ edges incident to $v$ in $G'$ (the dependence on the subgraph $G'$ and the grid $t$ is omitted from the notation). Let us define the \emph{weight of a vertex} $v$ in $G'$ as $W_{G',t}(v)=\min_{1\le j\le k}d_j(v)$. See Figure~\ref{figweight} for an illustration. We say that $v$ is \emph{heavy in $G'$} if $W_{G',t}(v)\ge \ell$. The weight of the subgraph $G'$ is defined as $W_t(G')=\sum_{v\in V(G')}W_{G',t}(v)$.

\begin{figure}\label{figweight}
		\begin{minipage}[t]{1\linewidth}
			\centering
			\fbox{\includegraphics[scale=1]{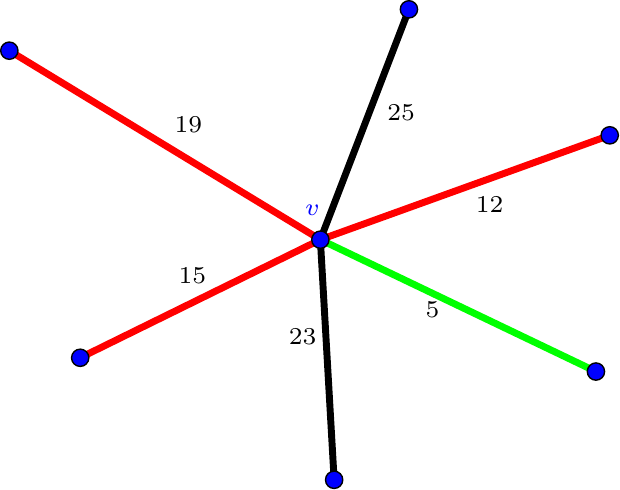}}
		\end{minipage}
	\caption{The figure depicts the neighborhood of a vertex $v$ in a subgraph $G'$ of $G$. In this example we have $k=3$, $m=30$. Choosing the grid $t$ consisting of the thresholds $0<10<20<30$ makes $d_1(v)=1$, $d_2(v)=3$, $d_3(v)=2$, and $W_{G',t}(v)=1$.}
	\end{figure}

Let $H'$ be a subgraph of $H$ and $G'$ be a subgraph of $G$ and let $t$ be a grid. We call a map $f$ a \emph{nice embedding} of $H'$ in $G'$ if $f$ maps the vertices of $H'$ to the vertices of $G'$, it induces an isomorphism between $H'$ and a subgraph of $G'$ (that is, it is injective, it maps edges to edges and it preserves the edge-order) and in addition it satisfies:
\begin{description}
\item[(i)] the image $f(y)$ of any right vertex $y$ is a heavy vertex in $G'$ and
\item[(ii)] for any edge $w_iy$ in $H'$, $f(w_i)f(y)$ is a class $i$ edge of $G'$.
\end{description}

Note that condition~(ii) implies that $f$ preserves the edge-order between edges incident to distinct left vertices. This makes it much simpler to check if a function $f$ preserves the edge-order: beyond checking condition~(ii) it is enough to compare the images of edges sharing a left vertex. This is the reason behind introducing nice embeddings, and also behind considering grids in general.

\begin{lemma}\label{largeweight}
If a subgraph $G'$ of $G$ and a grid $t$ satisfy $W_t(G')\ge2\ell(u+1)n$ for some integer $u\ge0$, then any $u$-edge subgraph $H'$ of $H$ has a nice embedding in $G'$.
\end{lemma}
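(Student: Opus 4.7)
I would argue by induction on $u$.

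For the base case $u=0$, $H'$ consists of at most $\ell$ isolated vertices and a nice embedding is any injection $V(H')\hookrightarrow V(G')$ sending right vertices to heavy vertices. Since every non-heavy vertex has weight strictly less than $\ell$, the inequality $W_t(G')\ge 2\ell n$ forces the heavy vertices to contribute at least $\ell n$; combined with the trivial bound $W_{G',t}(v)\le n-1$, this gives at least $\ell$ heavy vertices. The inequalities $2\ell n\le W_t(G')\le n(n-1)$ also force $n\ge 2\ell+1$, so the desired injection exists.

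For the inductive step $u\ge 1$, I would pick a leaf $L$ of $H'$ with pendant edge $e_L=LL'$ (which exists since $H'$ is a nonempty forest), set $H^{**}=H'-L$ with $u-1$ edges, and apply the inductive hypothesis (valid because $W_t(G')\ge 2\ell(u+1)n\ge 2\ell u n$) to obtain a nice embedding $f^{**}$ of $H^{**}$ in $G'$. The plan is then to extend $f^{**}$ to a nice embedding of $H'$ by specifying $f(L)$.

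If $L=w_j$ is a left leaf of $H'$, the extension is straightforward: $e_L$ is the only class-$j$ edge of $H'$, so there is no label constraint within class $j$; since $f^{**}(L')$ is heavy it has at least $\ell$ class-$j$ neighbors, while at most $|V(H^{**})|\le\ell-1$ vertices lie in $\operatorname{image}(f^{**})$, leaving at least one class-$j$ neighbor of $f^{**}(L')$ outside the image to serve as $f(w_j)$. The subcase where both endpoints of $e_L$ are leaves (an isolated edge) is handled similarly by choosing $f(L)$ and $f(L')$ simultaneously from the pool of unused vertices (heavy for the right endpoint).

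The main obstacle is the remaining subcase, when $L=y$ is a right leaf and $L'=w_j$ has further edges in $H'$: then $f^{**}(w_j)=u^*$ is prescribed, and we must find a heavy $v\notin\operatorname{image}(f^{**})$ such that $u^*v$ is a class-$j$ edge of $G'$ whose label occupies the specific slot of $e_L$ among the class-$j$ edges of $H'$ at $w_j$. To handle this I would choose the leaf $L$ strategically so that $e_L$ is the last class-$j$ edge at $w_j$ in $H'$, reducing the constraint to finding an unused class-$j$ edge at $u^*$ with label exceeding the maximum label among class-$j$ edges used by $f^{**}$ and a heavy other endpoint. When such a strategic choice is unavailable one refines the inductive hypothesis so that $f^{**}$ uses the lowest-labeled class-$j$ edges at $u^*$; the $2\ell n$ slack between $W_t(G')\ge 2\ell(u+1)n$ and the inductive requirement $2\ell u n$ provides the necessary headroom to support this refinement, which forms the technical crux of the lemma.
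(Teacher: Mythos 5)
Your base case and your left-leaf case match the paper's argument and are fine (including the observation that a lone class-$j$ edge in the image has its order determined automatically). The problem is exactly where you say it is: the right-leaf case is the technical crux, and your proposal does not actually contain an argument for it. Saying that one "refines the inductive hypothesis so that $f^{**}$ uses the lowest-labeled class-$j$ edges at $u^*$" is not a workable induction statement: the vertex $u^*=f^{**}(w_j)$ is not known before the embedding is constructed, the constraint would have to be imposed simultaneously at every potential image vertex, and it is not clear that an embedding with this extra property exists under the same weight hypothesis. Moreover, even with your "strategic" choice of a leaf whose pendant edge is the largest at $w_j$, such a leaf need not exist (only a leaf whose pendant edge is the largest \emph{or} smallest at its neighbor is guaranteed), and you give no proof of even that weaker existence statement.

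The paper resolves both points concretely. First, a right leaf $y$ whose edge $yw_i$ is extremal (smallest or largest) among the edges of $H'$ at $w_i$ always exists when $H'$ has no left leaf: walk through $H'$ always along the smallest or largest edge at the current vertex, choosing so as not to backtrack; the walk ends at a leaf with the required property. Second, instead of constraining the embedding of $H''=H'-y$, one constrains the host graph: call an edge $xz$ of $G'$ eligible for $z$ if it is a class-$i$ edge and $x$ is heavy in $G'$, and form $G''$ by deleting, for every vertex $z$, the $\ell$ smallest (resp.\ largest) eligible edges for $z$. This deletes at most $\ell n$ edges, hence lowers the weight by at most $2\ell n$, so $W_t(G'')\ge 2\ell u n$ and the induction hypothesis applies to $G''$. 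The embedded second-smallest (resp.\ second-largest) edge $f(y')f(w_i)$ at $w_i$ is eligible and survives in $G''$, which certifies that $\ell$ smaller (resp.\ larger) eligible edges at $f(w_i)$ were deleted; one of those deleted edges has its other endpoint outside the image of $H''$, and using it as $f(y)f(w_i)$ places the new edge automatically at the correct extreme position among the class-$i$ edges of the image. This pruning device, together with the extremal-leaf existence argument, is precisely the missing content of your "headroom" remark, so as it stands the proposal has a genuine gap at the decisive step.
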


\begin{proof}
We prove the lemma by induction on $u$. In the base case of $u=0$, $H'$ has no edges, so a nice embedding of $H'$ in $G'$ in this case is simply an injective mapping of the vertices of $H'$ to the vertices of $G'$ that maps right vertices to heavy vertices. The weight of any vertex $v$ of $G'$ satisfies $W_{G',t}(v)\le n$, but their sum, $W_t(G')$ is at least $2\ell n$, so at least $\ell$ vertices are heavy in $G'$. Therefore we can choose a nice embedding of $H'$ in $G'$ and moreover we can even ensure that it maps all vertices of $H'$ to heavy vertices in $G'$.

Let $u\ge 1$ and assume that the statement of the lemma holds for subgraphs $H'$ of $H$ with $u-1$ edges. Let us fix a subgraph $H'$ of $H$ with $u$ edges, a grid $t$ and a subgraph $G'$ of $G$ satisfying $W_t(G')\ge2\ell(u+1)n$. We need to show that there is a nice embedding of $H'$ in $G'$. We distinguish two cases.

First we assume that there is a left leaf vertex in $H'$, say $w_i$. Let $H''$ be the subgraph obtained from $H'$ by removing $w_i$. It has $u-1$ edges, so we can apply the inductive hypothesis to $H''$ and find a nice embedding $f$ of $H''$ in $G'$. Let $y$ be the (only) neighbor of $w_i$ in $H'$. To extend $f$ into a nice embedding of $H'$ in $G'$ all we need to do is find an image for $w_i$, a vertex $x$ outside the image of $H''$ that is connected to $f(y)$ by a class $i$ edge. Indeed, conditions~(i) and (ii) will be satisfied and the extended embedding still preserves the edge-order because the edge $xf(y)$ is the only class $i$ edge in the image of $H'$ and this determines its order among the other edges in the image $f(H')$. Clearly, $y$ is a right vertex, so by condition (i), its image $f(y)$ is heavy in $G'$, so there are at least $\ell$ other vertices connected to $f(y)$ by a class $i$ edge of $G'$. We can choose one of them outside the image of $H''$ which gives us a nice embedding of $H'$ in $G'$.

Assume now that $H'$ has no left leaf. In this case we find a right leaf $y$ of $H'$ such that its (only) neighbor $w_i$ satisfies the condition that the edge $yw_i$ is either the smallest or the largest among the edges incident to $w_i$ in $H'$. Let us first see why such a leaf $y$ exists. Start a path in $H'$ at an arbitrary non-isolated vertex and continue always on the lowest or the highest edge incident to the current vertex. Choose among these two options to avoid backtracking. As $H'$ is a forest, one eventually hits a leaf, say $y$ which must be a right vertex (since by assumption there are no left leaves) and must satisfy the condition stated above.

Let us obtain $H''$ by removing $y$ from $H'$. We will use the inductive hypothesis for $H''$ not with respect to $G'$ but with respect to a subgraph of $G'$. We call an edge $xz$ of $G'$ \emph{eligible for $z$} if it is a class $i$ edge and $x$ is heavy in $G'$. If $yw_i$ is the smallest edge at $w_i$ in $H'$, then we obtain the subgraph $G''$ by removing the $\ell$ smallest eligible edges from $G'$ for every vertex $z$ of $G'$. Similarly, if $yw_i$ is the largest edge at $w_i$ in $H'$, then we obtain the subgraph $G''$ by removing the $\ell$ largest eligible edges from $G'$ for every vertex $z$ of $G'$. In both cases, if there are fewer than $\ell$ eligible edges for a vertex we remove them all. This way we remove at most $\ell n$ edges in total. Note that as $G''$ is a subgraph of $G'$, therefore heavy vertices in $G''$ are also heavy in $G'$.

Let us consider how a single edge deletion changes the weights of the vertices. Clearly, only the weights of the two vertices connected by the deleted edge can change, and they decrease by at most $1$. Thus, removing at most $\ell n$ edges from $G'$ decreases its weight by at most $2\ell n$, so we have
$$W_t(G'')\ge W_t(G')-2\ell n\ge2\ell(u+1)n-2\ell n=2\ell un.$$

This makes the induction hypothesis applicable, so there is a nice embedding $f$ of $H''$ in $G''$. We want to extend $f$ with a well chosen image $x$ for $y$ to obtain a nice embedding of $H'$ in $G'$. We must choose a heavy vertex $x$ in $G'$ (to satisfy condition (i)) that is connected to $f(w_i)$ with a class $i$ edge (to satisfy condition (ii)). In other words, the edge $xf(w_i)$ must be eligible for $f(w_i)$. Furthermore, this vertex $x$ must be outside the image of $H''$ and (to preserve the edge-order) the edge $xf(w_i)$ must be in the correct place in the edge-order compared to the images of the edges of $H''$.

Let us consider the case when $yw_i$ is the smallest edge of $H'$ incident to $w_i$, the other case can be handled similarly. Note that $w_i$ is not a leaf as $H'$ has no left leaves. Let $y'w_i$ be the second smallest edge of $H'$ at $w_i$. Clearly, $f(y')f(w_i)$ is a class $i$ edge in $G''$ by condition~(ii) and $f(y')$ is heavy in $G''$ by condition~(i). Therefore $f(y')$ is also heavy in $G'$, so the edge $f(y')f(w_i)$ is eligible for $f(w_i)$. This is an edge in $G''$ and by the construction of $G''$ we know that $\ell$ smaller eligible edges for $f(w_i)$ were removed from $G'$. We choose $x$ such that $xf(w_i)$ is one of these removed smaller eligible edges and $x$ is outside the image of $H''$. This choice leads to a nice embedding of $H'$ in $G'$ because conditions~(i) and (ii) are satisfied and $xf(w_i)$ (being smaller than the edge $f(y')f(w_i)$) must be the smallest class $i$ edge in the image of $H'$. This finishes the proof of the inductive step and with that, the proof of the lemma.
\end{proof}

We needed the subgraphs $H'$ of $H$ and $G'$ of $G$ for the induction to work, but we will only use the following corollary based on the special case $H'=H$, $G'=G$:

\begin{corollary}\label{cor}
We have $W_t(G)<2\ell^2n$ for any grid $t$.
\end{corollary}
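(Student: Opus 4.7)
The plan is to derive the corollary directly from Lemma~\ref{largeweight} by a short contradiction argument, using the full $H$ and $G$ as the subgraphs $H'$ and $G'$.

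First I would observe that since $H$ is a forest on $\ell$ vertices, its number of edges is at most $\ell-1$. Set $u = |E(H)| \le \ell - 1$, so $u+1 \le \ell$, and therefore
$$2\ell(u+1)n \le 2\ell^2 n.$$
Now suppose for contradiction that some grid $t$ satisfies $W_t(G) \ge 2\ell^2 n$. Then in particular $W_t(G) \ge 2\ell(u+1)n$, so Lemma~\ref{largeweight} (applied to the $u$-edge subgraph $H' = H$ and the subgraph $G' = G$) produces a nice embedding of $H$ in $G$.

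By definition, a nice embedding induces an isomorphism between $H$ and a subgraph of $G$ that respects the edge-order, so $G$ contains $H$ as an edge-ordered subgraph. This contradicts our standing assumption that $G$ avoids $H$. Hence no such grid $t$ exists, giving $W_t(G) < 2\ell^2 n$ for every grid $t$, as claimed.

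There is no real obstacle here; the corollary is essentially a restatement of the contrapositive of Lemma~\ref{largeweight} specialized to $H' = H$ and $G' = G$, combined with the trivial forest bound $|E(H)| \le \ell - 1$ that turns the factor $\ell(u+1)$ into the cleaner $\ell^2$ appearing in the statement.
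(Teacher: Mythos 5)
Your argument is correct and is essentially the paper's own proof: the paper also applies Lemma~\ref{largeweight} with $H'=H$ and $G'=G$ and derives a contradiction with $G$ avoiding $H$, with the forest bound $u+1\le\ell$ left implicit. Your write-up just spells out that detail explicitly.
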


\begin{proof}
If $W_t(G)\ge2\ell^2n$, then $H$ has a nice embedding in $G$ by Lemma~\ref{largeweight}. This contradicts our assumption that $G$ avoids $H$.
\end{proof}  

So far we considered a fixed grid. But from now on, we consider a uniform random grid, that is, we choose a grid $t$ consisting of integer thresholds $0=t_0<t_1<t_2<\cdots<t_k=m$ uniformly from all the $\binom{m-1}{k-1}$ possible grids. We will use $E_t[\cdot]$ to denote the expectation with respect to a random grid. In order for this to make sense we assume $m\ge k$. We do not lose generality with this assumption as if $m<k$, then $f\ge m$ and therefore the statement of Theorem~\ref{step} is satisfied with choosing $G$ itself as the ``dense'' subgraph.

Let us denote the degree of a vertex $v$ in $G$ by $d_v$. We call this vertex $v$ a \emph{tame vertex} if one can cover the labels of at least $0.9d_v$ of the edges incident to $v$ by $k-1$ (well chosen) intervals, each of length $f$. Otherwise, we call $v$ a \emph{wild vertex}. Recall that $f$ is given in Theorem~\ref{step} in terms of $k$, $\ell$, $m$ and $d$.

\begin{lemma}\label{vertexweight}
Any wild vertex $v$ of $G$ satisfies
$$E_t[W_{G,t}(v)]\ge\frac{d_v(f+1)^{k-1}}{10k\binom{m-1}{k-1}}.$$
\end{lemma}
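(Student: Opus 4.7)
The plan is to use the tail-sum identity
\[
  E_t[W_{G,t}(v)] \;=\; \sum_{s\ge 1}\Pr_t[W_{G,t}(v)\ge s]
\]
and lower-bound, for each integer $s$ up to $\lfloor d_v/(10k)\rfloor$, the number of grids with $W_{G,t}(v)\ge s$ by at least $(f+1)^{k-1}$; summing over these $s$ values then gives the claimed bound. To set up the count, I would parametrize grids through the rank sequence induced on $v$'s edges. List the edges at $v$ by increasing label as $\ell_1<\dots<\ell_{d_v}$, set $\ell_0=0$, $\ell_{d_v+1}=m$ and $g_i=\ell_{i+1}-\ell_i$, and for each grid $t$ define $s_l:=|\{i:\ell_i\le t_l\}|$. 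Then the number of grids realizing a given strictly-increasing sequence $\mathbf{s}=(s_1,\dots,s_{k-1})$ equals exactly $\prod_{l=1}^{k-1}g_{s_l}$, and $W_{G,t}(v)\ge s$ is equivalent to the spacing condition $\sigma_j:=s_j-s_{j-1}\ge s$ for every $j=1,\dots,k$ (with $s_0=0$, $s_k=d_v$).

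To produce $(f+1)^{k-1}$ grids with $W_{G,t}(v)\ge s$, it then suffices to find $k-1$ ordered disjoint index-ranges $R_1<R_2<\dots<R_{k-1}$ in $\{0,\dots,d_v\}$ such that (a) each $R_l$ has label-span $\sum_{i\in R_l}g_i\ge f+1$, and (b) the between-gaps separating consecutive $R_l$ (and the two boundary gaps) are large enough to force $\sigma_j\ge s$ for every $\mathbf{s}$ with $s_l\in R_l$. Indeed, such ranges would give
\[
  \sum_{\mathbf{s}:\,s_l\in R_l}\prod_l g_{s_l}\;=\;\prod_l\Bigl(\sum_{i\in R_l}g_i\Bigr)\;\ge\;(f+1)^{k-1}.
\]
Starting from the balanced quantile template $s_l^{(0)}=l\lfloor d_v/k\rfloor$, I would place each $R_l$ as a short window around $s_l^{(0)}$, with the between-windows arranged to absorb the balance constraint of width $s\le d_v/(10k)$.

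The hard part is using the wildness of $v$ to guarantee the label-span condition for \emph{every} $R_l$ simultaneously. Wildness only says that \emph{any} $k-1$ length-$f$ label-intervals cover fewer than $0.9 d_v$ edges at $v$; applied to a candidate tuple of windows with all label-spans $\le f$, it gives $\sum_l(|R_l|+1)<0.9\,d_v$. If one sets up the windows so that $\sum_l|R_l|$ is forced to be at least $0.9\,d_v-(k-1)$, one obtains a contradiction, and hence at least one $R_l$ must have label-span $\ge f+1$ — but this produces only one wide range, not $k-1$. The main technical work is thus to upgrade "at least one wide" to "all $k-1$ wide", presumably by an iterative argument: peel off the wide window obtained, remove the corresponding edges at $v$ from consideration, and re-apply wildness on the residual edges, all while keeping both the balance requirement $\sigma_j\ge s$ and the wildness bound alive. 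Tuning the window widths, the $1/(10k)$ balance constant, and the $0.9\,d_v$ threshold so that this iteration closes up across all $k-1$ rounds is, I expect, exactly the trade-off that forces the specific constants (such as $134$, $\tfrac{1}{10}$, and $0.9$) appearing in the theorem's statement.
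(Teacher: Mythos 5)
Your framework — the tail-sum identity, parametrizing grids by the rank sequence at $v$, and counting grids with a prescribed strictly increasing rank sequence by the product of label-gaps — is sound and would recover the bound once suitable ranges $R_1<\dots<R_{k-1}$ are produced. But you leave precisely that step as a sketch, and the completion you propose does not go through. You want to start from short windows around the balanced quantiles $l\lfloor d_v/k\rfloor$, derive from wildness that at least one window must have label-span $\ge f+1$, and then iterate by removing it and "re-applying wildness on the residual edges." Two problems. First, with short balanced-quantile windows $\sum_l|R_l|$ is nowhere near $0.9d_v$, so applying wildness to a candidate cover with all spans $\le f$ gives no contradiction at all: wildness merely says such a cover misses more than $0.1d_v$ labels, which tiny windows trivially do. You therefore do not even obtain "one wide range" from this setup. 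Second, wildness as defined in the paper is a property of $v$ relative to its entire incident edge set and the fixed threshold $0.9d_v$; there is no residual form of it available after discarding a batch of edges, so the proposed iteration has no invariant and does not close.

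The paper's proof produces all $k-1$ (in fact $\ge k$) ranges in a single greedy pass and uses wildness exactly once, as a one-shot contradiction. Set $c=\lceil d_v/(10k)\rceil$. Let $a_1$ be the $c$-th smallest label at $v$ and $I_1=[a_1,a_1+f]$; inductively let $a_{i+1}$ be the $c$-th smallest label strictly above $I_i$ (if it exists) and $I_{i+1}=[a_{i+1},a_{i+1}+f]$, stopping at $I_{i^*}$ when fewer than $c$ labels remain above. By construction each of the $i^*+1$ gap regions (below $I_1$, between consecutive $I_i$'s, above $I_{i^*}$) contains at most $c-1$ labels. If $i^*\le k-1$, then $I_1,\dots,I_{i^*}$ are at most $k-1$ intervals of length $f$ that miss fewer than $(i^*+1)(c-1)<d_v/10$ labels, contradicting wildness. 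Hence $i^*\ge k$, and for every grid with $t_i\in I_i$ ($1\le i\le k-1$) each class $i$ picks up the $c$ labels at or immediately below $a_i$, so $W_{G,t}(v)\ge c$. There are $(f+1)^{k-1}$ such grids, giving $E_t[W_{G,t}(v)]\ge c(f+1)^{k-1}/\binom{m-1}{k-1}\ge d_v(f+1)^{k-1}/\bigl(10k\binom{m-1}{k-1}\bigr)$. The greedy placement, spacing intervals so that exactly $c-1$ labels fall in each gap, is what replaces your balanced-quantile template and makes "all ranges wide and well-separated" automatic rather than something to iterate toward. A further small gap in your version: $\sum_{s=1}^{\lfloor d_v/(10k)\rfloor}\Pr[W\ge s]$ is an empty sum whenever $d_v<10k$, even though the lemma's right-hand side is positive; the paper's single-threshold bound with the ceiling $c$ handles this case.
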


\begin{proof}

Let $ c =\left\lceil\frac{d_v}{10k}\right\rceil$. Consider the set $S$ of labels of the $d_v$ edges incident to $v$, let $a_1$ be the $c$-th smallest element in $S$ and set $I_1$ to be the interval $[a_1,a_1+f]$. Note that there are exactly $c-1$ elements of $S$ below $I_1$. For $i>1$, we consider the elements in $S$ above $I_{i-1}$, take $a_i$ to be the $c$-th smallest among them (if it exists) and set $I_i=[a_i,a_i+f]$. As before, we have exactly $c-1$ elements of $S$ between $I_{i-1}$ and $I_i$. This process ends with the interval $I_{i^*}$ if there are fewer than $c$ elements of $S$ above $I_{i^*}$. See Figure~3.

\begin{figure}[h]\label{figwild}
	\begin{minipage}[t]{1\linewidth}
		\centering
					\fbox{\includegraphics[scale=.9]{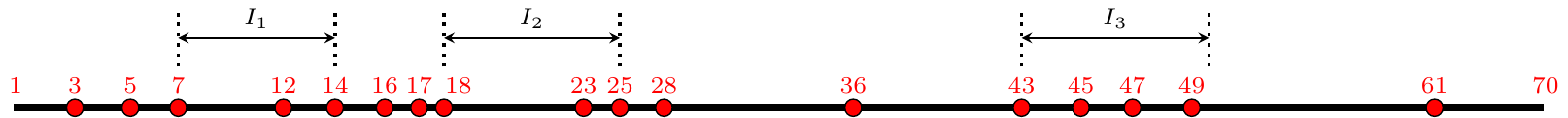}}
	\end{minipage}
	\caption{An illustration for the proof of Lemma~\ref{vertexweight}. The values marked are the labels of the edges incident to some wild vertex $v$. If we have $f=7$ and $c=3$, then the intervals $I_i$ are as shown and we have $i^*=3$ such intervals.}
\end{figure}

Consider first the case $i^*<k$. The elements of $S$ not covered by the intervals $I_1,\dots,I_{i^*}$ are either below $I_1$, above $I_{i^*}$ or between $I_i$ and $I_{i+1}$ for some $1\le i<i^*$. We saw that we have at most $c-1$ elements of $S$ in each of these $i^*+1$ categories, so we have at most $(i^*+1)(c-1)<d_v/10$ elements of $S$ not covered. As $i^*\le k-1$ this means that $v$ is tame, contrary to our assumption.

So we must have $i^*\ge k$. Consider the grids $t=(t_0,\ldots,t_k)$ where $t_0=0$, $t_k=m$ and $t_i$ is an integer from $I_i$ for $1\le i<k$. We can choose these intermediate values $t_i$ in exactly $f+1$ ways, so we have $(f+1)^{k-1}$ such grids. For each such grid $t$, the edges with labels up to and including $a_1$ are class 1 edges. As $a_1$ is the $c$-th smallest value in $S$, we have at least $c$ such edges incident to $v$ making the 1-degree of $v$ at least $c$. Similarly, for $1<i\le k$, all the edges with labels above $I_{i-1}$ but at most $a_i$ are necessarily class $i$ edges. As $a_i$ is the $c$-th smallest element of $S$ above $I_{i-1}$ we have at least $c$ class $i$ edges adjacent to $v$. This makes $W_{G,t}(v)\ge c$ for each of the grids considered. So by Markov's inequality we get $E_t[W_{G,t}(v)]\ge\frac{(f+1)^{k-1}}{\binom{m-1}{k-1}}c\ge\frac{d_v(f+1)^{k-1}}{10k\binom{m-1}{k-1}}$ as needed.
\end{proof}

Let us now fix a suitable set of intervals for every tame vertex of $G$. So we have $k-1$ intervals of length $f$ associated to each tame vertex, and they collectively cover the labels of at least $0.9$ fraction of the incident edges. We define a subgraph $G^*$ of $G$ as follows. The vertices of $G^*$ are the tame vertices of $G$, the edges of $G^*$ are the edges $uv$ of $G$ such that both $u$ and $v$ are tame and the label $L(uv)$ is covered by an interval associated to $u$ and also by a (possibly different) interval associated to $v$. Note that our construction ensures that the labels of each star subgraph in $G^*$ are covered by the intervals associated to the center vertex.

\begin{lemma}\label{gstar}
$G^*$ has at least $m/2$ edges.
\end{lemma}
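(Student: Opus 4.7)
The strategy is to upper bound the number of edges of $G$ that are \emph{not} in $G^*$ and show that this is strictly less than $m/2$. An edge $uv$ of $G$ fails to belong to $G^*$ for exactly one of two reasons: either at least one of its endpoints is wild, or both endpoints are tame but the label $L(uv)$ is not covered by an associated interval at one of the endpoints. I would bound these two contributions separately.

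The tame-but-uncovered case follows directly from the definition. At every tame vertex $u$, at most $0.1 d_u$ incident edges have labels outside the $k-1$ intervals associated to $u$, so the total count of (tame vertex, uncovered incident edge) pairs is at most $0.1\cdot 2m = m/5$. Each edge of this type contributes at least once to that count (through whichever endpoint fails), so there are at most $m/5$ such edges.

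The wild-endpoint case is where the previous two results are needed. Taking expectations over a uniform random grid in Corollary~\ref{cor} gives $E_t[W_t(G)] < 2\ell^2 n$. Since $W_t(G) = \sum_v W_{G,t}(v)$, restricting the sum to wild vertices and substituting the lower bound from Lemma~\ref{vertexweight} produces
$$\sum_{v\text{ wild}} \frac{d_v(f+1)^{k-1}}{10k\binom{m-1}{k-1}} \le E_t[W_t(G)] < 2\ell^2 n.$$
The definition of $f$ gives $(f+1)^{k-1} > 134 k\ell^2 m^{k-1}/d$; combining this with $\binom{m-1}{k-1}\le m^{k-1}$ and the identity $nd=2m$ rearranges the displayed inequality to $\sum_{v\text{ wild}} d_v < 20m/67$. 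This sum upper bounds the number of edges with a wild endpoint.

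The only delicate point is checking that the two contributions fit under the threshold: $20/67 + 1/5 = 167/335 < 1/2$, which is precisely what the constant $134$ in the definition of $f$ is tuned to guarantee. Adding the bounds, fewer than $m/2$ edges are missing from $G^*$, so $G^*$ has at least $m/2$ edges. Everything else is routine accounting against the two earlier results, so I expect no further obstacle beyond tracking this numerical slack carefully.
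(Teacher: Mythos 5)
Your proof is correct and follows essentially the same route as the paper: you bound the edges excluded from $G^*$ by splitting them between tame endpoints (at most $0.2m$ via the definition of tameness) and wild endpoints (via Corollary~\ref{cor} combined with Lemma~\ref{vertexweight} and the choice of $f$), just as the paper does, only tracking the constants slightly more sharply ($20m/67$ instead of $0.3m$). No gaps.
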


\begin{proof}
Instead of counting edges in $G^*$, we count edges of $G$ outside $G^*$. For each such edge $e$ we \emph{blame} one of the vertices connected by $e$: either a wild vertex or a tame vertex with none of the intervals associated to it covering $L(e)$.

If $v$ is tame, the associated intervals cover the labels of $0.9d_v$ edges, so we blame $v$ for the exclusion of at most $0.1d_v$ edges. All together, tame vertices are blamed for the exclusion of at most $\sum_{v\mathrm{\ tame}}0.1d_v\le0.1\sum_vd_v=0.2m$ edges.

By Corollary~\ref{cor} we have $W_t(G)<2\ell^2n$ for any grid $t$. So this holds for the expected weight too, giving the first inequality below:
\begin{eqnarray*}
2\ell^2n&>&E_t[W_t(G)]\\
&=&\sum_vE_t[W_{G,t}(v)]\\
&\ge&\sum_{v\mathrm{\ wild}}E_t[W_{G,t}(v)]\\
&\ge&\sum_{v\mathrm{\ wild}}\frac{d_vf^{k-1}}{10k\binom{m-1}{k-1}}\\
&=&\frac{f^{k-1}}{10k\binom{m-1}{k-1}}\sum_{v\mathrm{\ wild}}d_v,
\end{eqnarray*}
where the equation in the second line is the linearity of expectation, and the inequality in the fourth line comes from Lemma~\ref{vertexweight}.

If $v$ is wild, it is blamed for the exclusion of all $d_v$ incident edges. Thus, wild vertices are blamed for the exclusion of at most
$$\sum_{t\mathrm{\ wild}}d_v\le\frac{20k\ell^2n\binom{m-1}{k-1}}{f^{k-1}}<0.3m$$
edges, where the first inequality comes from the calculation in the previous paragraph, while the second inequality is guaranteed by the coarse bound $\binom{m-1}{k-1}<m^{k-1}$ and the choice of $f$ in the statement of Theorem~\ref{step}.

As the exclusion of at most $0.2m$ edges from $G^*$ is blamed on tame vertices and the exclusion of at most $0.3m$ edges is blamed on wild vertices we must have at least $m/2$ edges not excluded.
\end{proof}

All we need to finish the proof of Theorem~\ref{step} is Lemma~\ref{gstar}.

\begin{proof}[Proof of Theorem~\ref{step}.]
Let us summarize what we proved so far. We fixed $H$ and $G$ as in the theorem and further fixed a labeling $L$ giving the labels $1,\dots,m$ to the $m$ edges of the underlying simple graph $G_0$ of $G$ making $G=G_0^L$. We identified a subgraph $G^*$ of $G$ where the labels of the edges in each star subgraph can be covered by $k-1$ intervals of length $f$ each. Finally, we proved Lemma~\ref{gstar} stating that $G^*$ has at least $m/2$ edges.

We partition $G^*$ into subgraphs as follows: For $1\le j\le \lceil m/f\rceil$, we define $G^*_j$ to be the subgraph of $G^*$ consisting of the edges $e$ with $(j-1)f<L(e)\le jf$. A vertex $v$ of $G^*$ is a vertex of $G^*_j$ if at least one of the edges incident to $v$ is in $G^*_j$. Note that the edges with labels from an interval of length $f$ can show up in at most two different subgraphs $G^*_j$, therefore any vertex of $G^*$ shows up in at most $2k-2$ subgraphs $G^*_j$.

Clearly, $G^*_j$ is a subgraph of $G$ with at most $f$ edges. To finish the proof of the theorem we only need to establish that one of them has high enough average degree. For this, consider the disjoint union $Z$ of all the graphs $G^*_j$. Each edge of $G^*$ appears exactly once in $Z$, so we have $|E(Z)|=|E(G^*)|\ge m/2$. Each of the vertices of $G^*$ has at most $2k-2$ copies in $Z$, so we have $|V(Z)|\le(2k-2)|V(G^*)|\le(2k-2)n$. This makes the average degree of $Z$ at least $m/((2k-2)n)=d/(4k-4)$. At least one of the constituent graphs $G^*_j$ has average degree at least as high as their disjoint union $Z$, finishing our proof.
\end{proof}

\section{Proof of Theorem~\ref{main}}\label3

To prove Theorem~\ref{main} we will use Theorem~\ref{step} recursively. This is a standard calculation, but we write it down in full details to be self-contained. We make no effort to optimize the constants.

Let $H$ be an edge-ordered forest of order chromatic number 2. By Lemma~\ref{ocn2} it satisfies the conditions of Theorem~\ref{step} for the appropriate values of the parameters $k$ and $\ell$ unless the number of vertices on one side of the bipartition is $k=1$. However, if $k=1$, then $H$ is an edge-ordered star (with a few isolated vertices possibly added) and therefore it has a single edge-ordering up to isomorphism. In this case its extremal function agrees with the extremal function of its underlying simple graph, which is linear. Therefore we can assume $k>1$ and then Theorem~\ref{step} does apply to $H$.

Let $G_0$ be an arbitrary edge-ordered graph avoiding $H$. If $G_0$ has $n_0$ vertices and $m_0\ge1$ edges, then it has average degree $d_0=2m_0/n_0$. To simplify our calculation we will introduce various constants $c_i$ depending on $H$ and not depending on $G_0$. By Theorem~\ref{step}, $G_0$ contains a subgraph $G_1$ with $m_1\le c_1m_0/d_0^{c_2}$ edges and average degree $d_1\ge d_0/c_3$, where $c_1=(134k\ell^2)^{1/(k-1)}>1$, $c_2=1/(k-1)>0$ and $c_3=4k-4>1$. Any subgraph of $G_0$ must also avoid $H$, so we can apply Theorem~\ref{step} recursively: for $t\ge0$ let $G_{t+1}$ be a subgraph of $G_t$ as specified in the theorem. If we write $d_t$ for the average degree of $G_t$ and $m_t$ for the number of edges in $G_t$, we have
$$d_{t+1}\ge d_t/c_3\hskip2cm m_{t+1}\le c_1m_t/d_t^{c_2}$$
for every $t\ge0$. This recursion solves to
$$d_t\ge\frac{d_0}{c_3^t}\hskip2cm m_t\le\frac{c_1^tc_3^{c_2\binom t2}}{d_0^{c_2t}}m_0.$$
The average degree cannot exceed the number of edges, so $m_t\ge d_t$ holds for all $t$. This inequality can be rearranged to obtain
$$d_0^{c_2t}\le\frac{m_0}{d_0}\cdot(c_1c_3)^t\cdot c_3^{c_2\binom t2}.$$
This implies that at least one of the following three inequalities must hold for every $t$:
\begin{eqnarray}
d_0^{c_2t/3}&\le&\frac{m_0}{d_0},\label a\\
d_0^{c_2t/3}&\le&(c_1c_3)^t, \label b\\
d_0^{c_2t/3}&\le&c_3^{c_2\binom t2}.\label c
\end{eqnarray}
We choose $t=\left\lceil\frac{2\log d_0}{3\log c_3}\right\rceil$. Here and further in this calculation we use $\log$ to denote the binary logarithm. We need $t\ge1$ for our calculations, so here we assume $d_0>1$. Our choice ensures that (\ref c) does not hold. If (\ref b) holds, we have $d_0\le(c_1c_3)^{3/c_2}$ and therefore
\begin{equation}\label{d}
m_0=n_0d_0/2\le c_4n_0,
\end{equation}
with $c_4=(c_1c_3)^{3/c_2}/2$. Note that as $c_4>1/2$, this bound holds also in the case $d_0\le1$ that we earlier excluded. If (\ref a) holds, inserting the value of $t$ we obtain
$$\frac{n_0}2=\frac{m_0}{d_0}\ge 2^{\frac{2c_2\log^2d_0}{9\log c_3}}.$$
Rearrangement gives
$$\log d_0\le c_5\sqrt{\log n_0},$$
for $c_5=\sqrt{\frac{9\log c_3}{2c_2}}$. This yields
\begin{equation}\label{e}
m_0=n_0d_0/2\le n_02^{c_5\sqrt{\log n_0}}.
\end{equation}

As either (\ref d) or (\ref e) must hold for any edge-ordered graph $G_0$ having $n_0$ vertices and $m_0$ edges, we have
$$\ex_<(n,H)\le\max(c_4n,n2^{c_5\sqrt{\log n}})=n2^{O(\sqrt{\log n})}$$
as needed.

\section{Concluding remarks}\label4

The main open problem related to this paper is whether our Theorem~\ref{main} can be improved. We believe it can be improved and so we formulate the following conjecture that is the edge-ordered analogue of a similar conjecture on vertex-ordered graphs from \cite{PT}.

\begin{conjecture}\label{stronger}
$\ex_<(n,H)=n\log^{O(1)}n$ holds for all edge-ordered forests $H$ of order chromatic number $2$.
\end{conjecture}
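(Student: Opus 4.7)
The plan is to attack Conjecture~\ref{stronger} by first isolating the barrier built into the proof of Theorem~\ref{main}. The $n\cdot2^{O(\sqrt{\log n})}$ bound comes directly from the quadratic factor $c_3^{c_2\binom t2}$ that appears when the recursion of Theorem~\ref{step} is unrolled $t$ times, and this quadratic in turn reflects the constant-factor loss $c_3=4k-4$ in the average degree at each application. A short unrolling check shows that even replacing the constant factor with a polylogarithmic one still leaves the final bound super-polylogarithmic in $n$, so the iterative density-increment scheme cannot reach $\log^{O(1)}n$ through sharpening Theorem~\ref{step} alone. A qualitatively new ingredient is required.

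The first route I would pursue is to promote the single random grid of Lemma~\ref{largeweight} to a random \emph{hierarchical} partition of the edge-order---say a random dyadic subdivision with independent choices at each level. A vertex that is ``wild'' with respect to the whole hierarchy (and not merely one grid) should witness nice embeddings across many scales simultaneously, so any $H$-avoiding graph must have very few such vertices and can accordingly be pruned to a much sparser subgraph without dropping the average degree at all. If successful, this would close the gap in a single shot, removing the need to iterate and eliminating the compounding $c_3^{c_2\binom t2}$ factor entirely.

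A second, orthogonal route is to reduce Conjecture~\ref{stronger} to the vertex-ordered analogue of Pach and Tardos~\cite{PT}. Because each close left vertex $w_i$ corresponds to a consecutive block of the edge-order, one can recode $H$ as a vertex-ordered bipartite graph whose left side is the ordered sequence of blocks and whose right side is the set of non-close vertices; embeddings of $H$ into an edge-ordered host then correspond to pattern occurrences in a vertex-ordered incidence system derived from the host by ordering its edges. When $H$ matches under this encoding a vertex-ordered forest for which polylogarithmic bounds are already known, such as those of Kor\'andi, Tardos, Tomon and Weidert~\cite{KTTW}, the desired bound should transfer.

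The main obstacle on either route is that both demand pushing past the current state of the art. The Markov estimate in Lemma~\ref{vertexweight} is essentially tight against the fraction $(f+1)^{k-1}/\binom{m-1}{k-1}$ of grids it exploits, so upgrading it by a polynomial factor via a hierarchical grid will likely require an anti-concentration or entropy-style argument sensitive to correlations across scales, not a pointwise Markov bound. The vertex-ordered reduction succeeds only when the target lies in a family for which the Pach--Tardos conjecture is already established, and that conjecture remains open in general. I expect the hardest step to be merging these two perspectives---using a hierarchical grid to leverage the block structure of close vertices, while importing vertex-ordered machinery to avoid the compounding density losses that doom every purely iterative approach.
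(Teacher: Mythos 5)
This statement is Conjecture~\ref{stronger}, which the paper itself leaves open; there is no proof in the paper to compare against. Your proposal correctly recognizes this and does not claim a proof --- it is a research outline. Given that framing, your diagnosis of the barrier in Theorem~\ref{main} is accurate: the term $c_3^{c_2\binom t2}$ in the unrolled recursion, arising from the constant-factor degree loss $c_3=4k-4$ per application of Theorem~\ref{step}, is exactly what caps the method at $n\,2^{O(\sqrt{\log n})}$, and the paper's choice $t=\Theta(\sqrt{\log n})$ is forced by balancing this quadratic-in-$t$ loss against the linear-in-$t$ gain. Any multiplicative density loss per step, even a polylogarithmic one, would still compound over the $\Theta(\sqrt{\log n})$ iterations needed, so your conclusion that the iterative scheme cannot by itself reach $n\log^{O(1)}n$ is sound.

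Of your two proposed routes, the hierarchical-grid idea is the more novel; the obstacle you flag is real, since Lemma~\ref{vertexweight} is a Markov-type bound over a one-level grid and upgrading it to a multi-scale object would require controlling correlations between levels, for which no mechanism is offered here. The reduction to vertex-ordered forests is a natural thought (and the paper itself places Conjecture~\ref{stronger} as the edge-ordered analogue of the Pach--Tardos conjecture \cite{PT}), but you correctly note it only transfers known bounds in special cases; the paper's own concluding remarks discuss exactly this parallel and cite Pettie's counterexample \cite{Pettie} as a cautionary tale against over-strong forms. In short, your proposal does not prove the conjecture, but it is not supposed to: the problem is open, your barrier analysis matches the structure of the paper's argument, and your speculative directions are reasonable though each carries the unresolved difficulties you yourself identify.
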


Such strong upper bounds were proved for several small edge-ordered paths in \cite{GMNPTV}, including all 4-edge edge-ordered paths. In particular, an upper bound $O(n\log n)$ was proved for the extremal function of all edge-ordered 4-edge paths of order chromatic number $2$ but for the equivalent exceptional cases of the path $P_5^{1342}$ and $P_5^{4213}$. (The upper index should be interpreted as the list of labels along the path.) For the extremal function of these exceptional edge-ordered paths an $O(n\log^2n)$ bound was proved. In an upcoming paper \cite{KT2} we extend this study to most edge-ordered 5-edge paths of order chromatic number 2, proving an upper bound of either $O(n)$, $O(n\log n)$ or $O(n\log^2 n)$ for their extremal functions, but (up to equivalence) four exceptional edge-ordered 5-edge paths: $P_6^{14523}$, $P_6^{15423}$, $P_6^{31254}$ and $P_6^{14532}$ eluded our efforts and for their extremal functions only the weaker upper bounds from this paper apply.

From the other direction, we know many edge-ordered forests of order chromatic number $2$ (including several edge-ordered 4-edge paths) whose extremal function is $\Omega(n\log n)$ (see \cite{GMNPTV}) but we do not have a single example with a extremal function higher than that. Learning from Pettie's refutation, \cite{Pettie} of a conjecture of F\"uredi and Hajnal, \cite{FH} (see the context in the Introduction), we refrain from making too strong a conjecture here, instead we ask whether an edge-ordered version of Pettie's example can be found, that is an edge-ordered tree of order chromatic number 2 with extremal function $\Omega(n\log n\log\log n)$ (or higher)?

\section*{Acknowledgements}
We are grateful for the comments of Mykhaylo Tyomkyn improving the presentation of this paper. The ERC advanced grant ``GeoSpace'' made it possible for the first author to visit R\'enyi Institute in the fall of 2021 and helped enormously with the collaboration resulting in this paper.

\thebibliography{99}


\bibitem{Erdos}P. Erd\H os. Extremal problems in graph theory, {\sl Proc. Symposium on Graph theory}, Smolenice, Acad. C.S.S.R. (1963), 29--36.

\bibitem{ES2}P. Erd\H os, M. Simonovits. A limit theorem in graph theory, {\sl Studia Sci. Math. Hungar.} {\bf1} (1966), 51--57.

\bibitem{ES1} P. Erd\H os, A. H. Stone. On the structure of linear graphs, {\sl Bulletin of the American Mathematical Society} {\bf52} (1946), 1087--1091.

\bibitem{FH} Z. F\"uredi, P. Hajnal, Davenport-Schinzel theory of matrices, {\sl Discrete Mathematics} {\bf103} (1992), 233--251.

\bibitem{GMNPTV} D. Gerbner, A. Methuku, D. Nagy, D. P\'alv\"olgyi, G. Tardos, M. Vizer, Edge ordered Tur\'an problems, manuscript, 2020, available at arXiv:2001.00849.

\bibitem{KTTW} D. Kor\'andi, G. Tardos, I. Tomon, C. Weidert, On the Tur\'an number of ordered forests, {\sl Journal of Combinatorial Theory, Series A} {\bf165} (2019), 32--43.

\bibitem{KT2} G. Kucheriya, G. Tardos, On edge-ordered graphs with linear extremal functions, manuscript, 2022. 

\bibitem{LUW}F. Lazebnik, V.A. Ustimenko, A.J. Woldar. Properties of certain families of $2k$-cycle-free graphs, {\sl Journal of Combinatorial Theory, Series B} {\bf60} (1994), 293--298.

\bibitem{PT}J. Pach, G. Tardos, Forbidden paths and cycles in ordered graphs and matrices, {\sl Israel Journal of Mathematics} {\bf155} (2006), 359--380.

\bibitem{Pettie} S. Pettie, Degrees of nonlinearity in forbidden 0-1 matrix problems, {\sl Discrete Mathematics} {\bf311} (2011), 2396--2410.

\end{document}